\pgfplotsset{compat=1.8}
\numberwithin{equation}{section}
\newcommand{\R}{\mathbb{R}}
\newcommand{\N}{\mathbb{N}}
\newcommand{\1}{\mathbf{1}}
\newcommand{\Pro}{\ensuremath\mathbb{P}}
\newcommand{\norm}[1]{\left\lVert#1\right\rVert}
\newcommand{\sumj}{\sum_{j \leq d}}
\newcommand{\sumk}{\sum_{k>d}}
\newcommand{\sumn}{\sum_{i=1}^n}
\newcommand{\PV}{P_{\leq d}}
\newcommand{\PVH}{\hat{P}_{\leq d}}
\newcommand{\tr}{\textup{tr}}
\newcommand{\SG}{L}
\renewcommand{\leq}{\leqslant}
\renewcommand{\geq}{\geqslant}
\newcommand{\PD}{K}
\newcommand{\ld}{\lambda}
\newtheorem{theorem}{Theorem}
\newtheorem{lemma}[theorem]{Lemma}
\newtheorem{corollary}[theorem]{Corollary}
\newtheorem{assumption}[theorem]{Assumption}
\theoremstyle{remark}
\begin{document}
\title{High-probability bounds for the reconstruction error of PCA}
\author{Cassandra Milbradt\thanks{Humboldt-Universit\"at zu Berlin, Germany. E-mail: cassandra.milbradt@gmail.com}\qquad Martin Wahl\thanks{Humboldt-Universit\"at zu Berlin, Germany. E-mail: martin.wahl@math.hu-berlin.de\newline \textit{Key words and phrases.} principal component analysis, reconstruction error, oracle inequality, polynomial chaos\newline \textit{2010 Mathematics Subject Classification.} 62H25}}
\date{}


\maketitle

\begin{abstract}
We derive high-probability bounds for the reconstruction error of PCA in infinite dimensions. We apply our bounds in the case that the eigenvalues of the covariance operator satisfy polynomial or exponential upper bounds.
\end{abstract}

\section{Introduction and Notation}

Principal component analysis (PCA) is a standard tool for dimensionality reduction. Motivated by its extensions to functional PCA and kernel PCA, we are concerned with the statistical properties of PCA in infinite dimensions. In such scenarios, the eigenvalues of the covariance (resp. kernel) operator often decay at a polynomial or nearly exponential rate, which in turn implies that the minimal reconstruction error has a certain rate in the reconstruction dimension. In this paper, we investigate whether the latter rate is also achieved by the empirically chosen model.

We consider a random variable $X$ with values in a Hilbert space $\mathcal{H}$. In what follows, $\langle\cdot,\cdot\rangle$ denotes the inner product in $\mathcal{H}$, with $\norm{\cdot}$ being the corresponding norm. We suppose that $X$ is  centered and strongly square integrable, meaning that $\mathbb{E}X=0$ and $\mathbb{E}\| X\| ^2< \infty$. The goal is to reduce the dimensionality of $X$ by finding the minimizer $\PV$ of the reconstruction error $R(P)=\mathbb{E}\| X-PX\| ^2$ over the class $\mathcal{P}_d$ of orthogonal projections of rank $d$. Yet, the distribution of $X$ is unknown and therefore the minimizer $\PV$ cannot be computed. The idea is, for a given sequence $X_1, \dots, X_n$ of independent copies of $X$, to compute the minimizer $\PVH\in \mathcal{P}_d$ of the empirical reconstruction error $R_n(P)= n^{-1}\sumn \| X_i - PX_i\| ^2$. Given the interest in the performance of the empirically chosen model $\PVH$ especially when observing new data, it is natural to analyze the reconstruction error of PCA, i.e.~the random variable $R(\PVH)$.

Bounds for the reconstruction error of PCA and the corresponding excess risk $R(\PVH)-R(\PV)$ can be derived using the theory of empirical risk minimization. This has
been pursued by Shawe-Taylor, Williams, Cristianini and Kandola \cite{SWCK05} and Blanchard, Bousquet and Zwald \cite{BBZ07}. In \cite{SWCK05}, a slow $n^{-1/2}$-rate is derived, while
in \cite{BBZ07}, it is shown that the convergence rate of the excess risk can be faster than $n^{-1/2}$ (but depending on a spectral gap condition).
In Rei\ss{} and Wahl \cite{ReissWahl}, it is shown how a fast $n^{-1}$-rate can be obtained by a version of the well-known Davis--Kahan $\sin\Theta$ theorem.
Moreover, since the resulting bound breaks down quickly in the infinite-dimensional case, \cite{ReissWahl} and subsequently Jirak and Wahl \cite{MR4052188} and Wahl \cite{Wahl} developed new perturbation techniques, leading (among others) to sharp excess risk bounds in the case that the
eigenvalues of the covariance operator of $X$ have exponential or polynomial decay. 

Many algorithms in statistics and machine learning employ PCA as a first step to reduce the high dimensionality of the data. Analyzing such procedures, it often
turns out that the main interest is not in optimal rates for the excess risk but rather in bounds implying that empirical PCA produces a finite-dimensional model
having (up to a constant) the same reconstruction property as the optimal one (see, for example, Nouy \cite{Nouy}). Furthermore, the dependence on PCA is often
highly non-linear, hence expectation bounds, as derived in \cite{ReissWahl}, are not sufficient to analyze these procedures. Motivated by these two facts, our
main goal is to derive oracle inequalities in high-probability that can be expressed relative to the minimal reconstruction error. In particular, a major
achievement of our paper is to show that oracle inequalities of the type 
\begin{equation}\label{weakOracleBound}
R(\PVH)\leq C \min_{P \in \mathcal{P}_d} R(P)
\end{equation}
can be established (with high probability) under one-sided eigenvalue conditions. For instance, we apply our bounds to eigenvalues satisfying polynomial or exponential upper bounds,
which are key features in reproducing kernel Hilbert spaces and functional data approaches in machine learning and statistics, see
e.g.~\cite{BachJordan,MR2335249,pmlr-v75-belkin18a}.

We finish the introduction by describing the link of PCA to the spectral decomposition of the (empirical) covariance operator. The covariance operator of $X$ is denoted by $\Sigma=\mathbb{E}X\otimes X$. By the spectral theorem, there exists a sequence $\lambda_1\geq \lambda_2\geq\dots>0$
of  positive eigenvalues (which is summable since $\mathbb{E}\| X\| ^2< \infty$) together with an orthonormal system of eigenvectors $u_1,u_2,\dots$ such that $\Sigma=\sum_{j\ge 1}\lambda_j P_j$, with rank-one projectors $P_j=u_j\otimes u_j$, 
where $(u\otimes v)x=\langle v,x\rangle u$, for $u,v,x\in \mathcal{H}$. Without loss of generality we shall assume that the eigenvectors $u_1,u_2,\dots$ form an orthonormal basis of $\mathcal{H}$ such that $\sum_{j\ge 1}P_j=I$. In addition, we define the sample covariance of $X_1, \dots, X_n$ as $\hat{\Sigma} = n^{-1} \sum_{i = 1}^n X_i \otimes X_i$. Again, there exists a sequence $\hat{\ld}_1 \geq \hat{\ld}_2 \geq \cdots \geq 0$ of eigenvalues of $\hat{\Sigma}$ and an orthogonal basis of eigenvectors $\hat{u}_1, \hat{u}_2, \dots$ of $\mathcal{H}$ such that $\hat{\Sigma} = \sum_{j \geq 1} \hat{\ld}_j \hat{P}_j$ with $\hat{P}_j = \hat{u}_j \otimes \hat{u}_j$. Now, the reconstruction error and the empirical reconstruction error can be written as 
\[R(P) = \tr(\Sigma( I-P))\quad\text{and}\quad R_n(P) =\tr(\hat{\Sigma} (I-P)),
\] where $\tr(\cdot)$ denotes the trace. Hence, a minimizer of $R(P)$ maximizes $\tr (\Sigma P)$, from which one can deduce that a minimizer of the reconstruction error is given by the orthogonal projection onto the linear subspace spanned by the first $d$ eigenvectors of $\Sigma$, i.e.~\[
\PV = \sumj P_j\quad\text{satisfies}\quad \PV \in \arg\min_{P \in \mathcal{P}_d}R(P)\quad\text{with}\quad R(\PV) = \sumk \ld_k.
\]
Replacing the projections $P_j$ by $\hat{P}_j$, the minimizer $\PVH$ of the empirical reconstruction error is given in the same way.

\section{Main result}

In this section we formulate our main error bound for the reconstruction error of PCA. It relies on a sub-Gaussian assumption on the Karhunen--Lo\`eve coefficients $\eta_j=\lambda_j^{-1/2}\langle X,u_j\rangle$, $j\geq 1$. Extensions under weaker moment assumptions are possible but beyond the scope of this paper.
\begin{assumption}\label{AssSG}
 Suppose that the $(\eta_j)_{j \geq 1}$ are independent. Moreover, for some constant $\SG > 0$ suppose that
\begin{equation}
 \sup_{j\geq 1}\sup_{q\geq 1} q^{-1/2} \left(\mathbb{E}|\eta_j| ^q\right)^{1/q} \leq \SG.
\end{equation}
\end{assumption}
Assumption \ref{AssSG} is satisfied with $\SG=1$ if $X$ is Gaussian. It gives a slightly stronger notion of a sub-Gaussian random variable than the
one in e.g. \cite[Definition 5.7]{Vershynin} or \cite[Assumption 2.1]{ReissWahl}.  
\begin{theorem}\label{Theorem}
Under Assumption \ref{AssSG}, there are constants $c_1,c_2,C_1>0$ depending only on $\SG$ such that the following holds. Let $1\leq d'\leq d$ be natural numbers such that $\ld_{d'}\geq 2\ld_{d+1}$ and $\max(d',\ld_{d'}^{-1}\sumk \ld_k) \leq c_1 n$. Then, for all $1\leq t \leq c_2n$, with probability at least $1-\exp(-t)$,
\begin{equation}\label{EqTheorem1}
 R(\PVH) \leq \left( 1 + C_1 \left(\frac{d'}{n}+\frac{t}{n}\right)\right) \min_{P\in \mathcal{P}_{d'}}R(P).
\end{equation}
In particular, with probability at least $1-\exp(-c_2n)$,
\begin{equation}\label{EqTheorem2}
  R(\PVH) \leq (1+C_1(c_1+c_2))\min_{P \in \mathcal{P}_{d'}} R(P).
\end{equation}
\end{theorem}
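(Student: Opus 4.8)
The strategy is to compare $R(\PVH)$ to $R(\PV)$ via the excess risk $R(\PVH) - R(\PV) = \tr(\Sigma(\PV - \PVH))$, and then to use the spectral gap at level $d'$ rather than at level $d$, since the hypothesis $\ld_{d'} \ge 2\ld_{d+1}$ is what provides a usable gap. First I would decompose the excess risk using the Davis--Kahan/perturbation machinery: writing $\PVH = \sum_{j \le d}\Phj$ and splitting the projector difference into "low" coordinates ($j \le d'$) and "high" coordinates ($j > d'$), one obtains an identity of the form
\begin{equation}
R(\PVH) - R(\PV) = \sum_{j \le d'}\sum_{k > d}(\ld_j - \ld_k)\langle u_j, \uh_k\rangle^2 - \sum_{j > d'}\sum_{k \le d}(\ld_k - \ld_j)\langle u_j, \uh_k\rangle^2 + (\text{terms involving }d' < j,k),
\end{equation}
or more cleanly one bounds $R(\PVH)-R(\PV) \le \sum_{j>d'}\ld_j \|(\PV^{(d')} - \PVH)u_j\|^2$-type quantities, where $\PV^{(d')} = \sum_{j\le d'}P_j$. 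The point is that on the complement the relevant eigenvalues are $\le \ld_{d'+1} \le \ld_{d'}$, while $\min_{P\in\mathcal P_{d'}}R(P) = \sum_{k>d'}\ld_k$, so an estimate of the form $R(\PVH) - \sum_{k>d'}\ld_k \le C\,\ld_{d'}\cdot(\text{operator-norm perturbation})^2 \cdot (\text{effective dimension})$ would, after dividing by $\sum_{k>d'}\ld_k \ge \ld_{d'}$... no: one needs the sharper relative bound, so the key is to show the excess-over-$d'$ error is bounded by $C(d'/n + t/n)\sum_{k>d'}\ld_k$ directly.

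The heart of the argument is therefore a relative perturbation bound. I would invoke (or re-derive) the linearization from \cite{ReissWahl, Wahl}: with $\Delta = \hat\Sigma - \Sigma$, one has $\langle u_j, \uh_k\rangle$ controlled by $\langle u_j, \Delta u_k\rangle/(\ld_j - \ld_k)$ plus higher-order terms, and the sum $\sum_{j\le d', k>d}$ of these over the gap is handled via the sub-Gaussian concentration of the bilinear forms $\langle u_j, \Delta u_k\rangle = n^{-1}\sum_i \lambda_j^{1/2}\lambda_k^{1/2}\eta_j^{(i)}\eta_k^{(i)}$. Here Assumption \ref{AssSG} (independence plus uniform sub-Gaussian norm of the $\eta_j$) lets me control these quadratic-form fluctuations, and a chaining/polynomial-chaos argument (as the paper's keyword "polynomial chaos" suggests) gives, with probability $1 - e^{-t}$, a bound on $\|\PV^{(d')}\Delta(I - \PV)\|$-type quantities of order $\ld_{d'}^{1/2}(\sum_{k>d'}\ld_k)^{1/2}\sqrt{(d'+t)/n}$, together with an operator-norm bound $\|\Delta\| \lesssim \ld_{d'}\sqrt{(\ld_{d'}^{-1}\sum_k \ld_k + t)/n}$ which, under the hypothesis $\max(d', \ld_{d'}^{-1}\sum_{k>d}\ld_k) \le c_1 n$, is a small constant times $\ld_{d'}$ — this is exactly what is needed so that denominators $\ld_{d'} - \ld_{d+1} \ge \ld_{d'}/2$ stay bounded away from degeneracy after perturbation. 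Combining these, the excess risk relative to $\sum_{k>d'}\ld_k$ picks up the claimed factor $1 + C_1(d'/n + t/n)$.

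The main obstacle, as usual in this circle of ideas, is controlling the higher-order terms in the perturbation series uniformly — i.e.\ showing the Neumann-type remainder does not spoil the relative bound. This requires that the empirical eigenvalues $\lh_j$ for $j \le d'$ stay comfortably above $\lh_k$ for $k > d$, which follows from Weyl's inequality combined with the operator-norm control $\|\Delta\| \le \varepsilon\ld_{d'}$; one gets $\lh_{d'} \ge \ld_{d'} - \varepsilon\ld_{d'}$ and $\lh_{d+1} \le \ld_{d+1} + \varepsilon\ld_{d'} \le (1/2 + \varepsilon)\ld_{d'}$, preserving a gap of order $\ld_{d'}$. The constant $c_1$ is chosen small enough to make $\varepsilon$ small; $c_2$ is chosen so that $t \le c_2 n$ keeps the concentration bounds in their Gaussian (rather than heavy-tailed) regime. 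Finally, the second assertion \eqref{EqTheorem2} is the immediate specialization $t = c_2 n$ in \eqref{EqTheorem1}, using $d' \le c_1 n$ to replace $d'/n$ by $c_1$.
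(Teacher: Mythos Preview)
Your overall strategy matches the paper's: both reduce to a perturbation bound at level $d'$ (exploiting $\ld_{d'}\ge 2\ld_{d+1}$), concentrate the off-diagonal part of $\hat\Sigma-\Sigma$ via polynomial-chaos/Hanson--Wright methods to get the relative factor $(d'/n+t/n)\sum_{k>d'}\ld_k$, and work on a ``good event'' where the empirical gap at $d'$ is preserved. The paper organizes this through the weighted operator $S_{\le d'}=\sum_{j\le d'}(\ld_j-\ld_{d+1})^{-1/2}P_j$: the key deterministic bound (from \cite{ReissWahl}) is $R(\PVH)\le 16\|\Sigma_{d'}\|_\infty\|S_{\le d'}(\Sigma-\hat\Sigma)P_{>d'}\|_2^2+\sum_{k>d'}\ld_k$ on the good event, and the Hilbert-space Hanson--Wright inequality from \cite{ALM} controls the Hilbert--Schmidt term.

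There is, however, a genuine gap in your handling of the good event. You propose to preserve the empirical gap via Weyl's inequality together with a global operator-norm bound $\|\Delta\|=\|\hat\Sigma-\Sigma\|\lesssim \ld_{d'}\sqrt{(\ld_{d'}^{-1}\sum_k\ld_k+t)/n}$, concluding $\lh_{d'}\ge(1-\varepsilon)\ld_{d'}$ and $\lh_{d+1}\le(1/2+\varepsilon)\ld_{d'}$. But $\|\Delta\|$ scales with $\ld_1$, not with $\ld_{d'}$: in the polynomial-decay examples of Section~\ref{SecEx} one has $\ld_1/\ld_{d'}\asymp d'^{\alpha}$, so $\|\Delta\|$ can be arbitrarily large compared to $\ld_{d'}$ under the stated hypotheses, and the Weyl step fails. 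The paper avoids this by never invoking $\|\Delta\|$ at all. Its good event is defined through \emph{localized} quantities: the weighted operator norm $\|S_{\le d'}(\Sigma-\hat\Sigma)S_{\le d'}\|_\infty\le 1/4$ and the single-eigenvalue deviation $\hat\ld_{d+1}-\ld_{d+1}\le(\ld_{d'}-\ld_{d+1})/2$. Both of these can be controlled (Lemmas 3.9 and Corollary 3.12 of \cite{ReissWahl}) under exactly the condition $\max(d',\ld_{d'}^{-1}\sum_{k>d}\ld_k)\le c_1 n$, with no appeal to $\|\Delta\|$. To repair your argument you need to replace the global operator-norm/Weyl step by such localized controls; the rest of your sketch is broadly in line with the paper.
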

The second bound \eqref{EqTheorem2} shows that the reconstruction error of PCA can be bounded by a constant (typically close to one) times the minimal reconstruction error over $\mathcal{P}_{d'}$.

\section{Examples and discussion}
\label{SecEx}
Let us illustrate our upper bound for eigenvalues satisfying a polynomial or nearly exponential upper bound. Such eigenvalue structures are typically considered
in the context of functional data or statistical machine learning, see e.g. \cite{BachJordan,MR15, pmlr-v75-belkin18a}. In these cases, $\min_{P \in \mathcal{P}_d} R(P)$ (equals the
remainder trace) has a certain rate in $d$ and our bounds imply that the same rate holds for $R(\PVH)$.
\begin{corollary}
\label{CorPD}
Suppose that Assumption \ref{AssSG} holds. Moreover, suppose that for some $\alpha>1$ and $\PD>0$ we have $\ld_j \leq \PD j^{-\alpha}$ for all $j\geq 1$. Then there are constants $c_1,c_2,C_1>0$ depending only on $\alpha,\PD$ and $L$ such that, for all $d\leq c_1 n$, with probability at least $1-\exp(-c_2n)$,
\begin{equation}\label{EqCorPD}
  R(\PVH) \leq  C_1 d^{1-\alpha}.
\end{equation}
\end{corollary}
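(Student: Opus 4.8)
The plan is to read the corollary off Theorem~\ref{Theorem} by choosing, for each $d \le c_1 n$, a suitable auxiliary dimension $d' \le d$. Throughout I would use the elementary tail estimate $\sum_{k>m}\lambda_k \le K\sum_{k>m}k^{-\alpha} \le \frac{K}{\alpha-1}m^{1-\alpha}$, which in particular gives $\min_{P\in\mathcal{P}_m}R(P) = \sum_{k>m}\lambda_k \le \frac{K}{\alpha-1}m^{1-\alpha}$, together with the crude deterministic bound $R(\PVH) = \E\|(I-\PVH)X\|^2 \le \E\|X\|^2 = \tr\Sigma$. Writing $c_1,c_2,C_1$ for the constants of Theorem~\ref{Theorem}, I would set the threshold $\theta := \frac{K}{(\alpha-1)c_1 n}\,d^{1-\alpha}$; after shrinking the corollary's constant if necessary, $d \le c_1 n$ forces $\theta \le \frac{K}{\alpha-1}d^{-\alpha}$, and also $\lambda_{d+1} \le K(d+1)^{-\alpha} \le Kd^{-\alpha}$.

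The argument then splits into two cases. If $\lambda_1 < \max(2\lambda_{d+1},\theta)$, then $\lambda_1 \le 2\lambda_{d+1}+\theta \le C(\alpha,K)d^{-\alpha}$, so $\lambda_j \le \min(\lambda_1,Kj^{-\alpha}) \le C(\alpha,K)\min(d^{-\alpha},j^{-\alpha})$ for all $j$; summing over $j \le d$ and over $j > d$ separately gives $\tr\Sigma \le C(\alpha,K)d^{1-\alpha}$, and hence $R(\PVH) \le C(\alpha,K)d^{1-\alpha}$ deterministically.

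In the complementary case $\lambda_1 \ge \max(2\lambda_{d+1},\theta)$, I would take $d' := \max\{j \le d : \lambda_j \ge \max(2\lambda_{d+1},\theta)\}$, which is well defined with $1 \le d' \le d$. By construction $\lambda_{d'} \ge 2\lambda_{d+1}$ and $\lambda_{d'} \ge \theta$. The bound $\lambda_{d'} \ge \theta$ combined with the tail estimate gives $\lambda_{d'}^{-1}\sum_{k>d}\lambda_k \le \theta^{-1}\frac{K}{\alpha-1}d^{1-\alpha} = c_1 n$, and together with $\lambda_{d'} \le K(d')^{-\alpha}$ it forces $(d')^\alpha \le K/\theta$, whence $d' \le C(\alpha)\,n^{1/\alpha}d^{(\alpha-1)/\alpha} \le c_1 n$ provided the corollary's constant is small enough in terms of $\alpha$. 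So the hypotheses of Theorem~\ref{Theorem} are met and \eqref{EqTheorem2} applies with oracle term $\sum_{k>d'}\lambda_k$. If $d' = d$, this term is $\le \frac{K}{\alpha-1}d^{1-\alpha}$; if $d' < d$, then $\lambda_{d'+1} < \max(2\lambda_{d+1},\theta) \le C(\alpha,K)d^{-\alpha}$, so the at most $d$ eigenvalues $\lambda_{d'+1},\dots,\lambda_d$ add up to at most $C(\alpha,K)d^{1-\alpha}$ and $\sum_{k>d}\lambda_k \le \frac{K}{\alpha-1}d^{1-\alpha}$ — in either case $\sum_{k>d'}\lambda_k \le C(\alpha,K)d^{1-\alpha}$. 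Plugging this into \eqref{EqTheorem2} yields $R(\PVH) \le (1+C_1(c_1+c_2))\,C(\alpha,K)\,d^{1-\alpha}$ with probability at least $1-\exp(-c_2 n)$, and absorbing the factors into a single constant $C_1=C_1(\alpha,K,L)$ gives \eqref{EqCorPD}.

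The main obstacle will be exactly this choice of $d'$: Theorem~\ref{Theorem} asks simultaneously for a spectral gap $\lambda_{d'} \ge 2\lambda_{d+1}$ and for $\lambda_{d'}$ not to be too small relative to $d^{1-\alpha}/n$, but the hypothesis $\lambda_j \le Kj^{-\alpha}$ controls the spectrum only from above, so a priori $\lambda_{d'}$ could be tiny for every $d'$ close to $d$. Choosing $d'$ as the last index below $d$ whose eigenvalue still exceeds the threshold $\max(2\lambda_{d+1},\theta)$ repairs this, since then every eigenvalue strictly between $d'$ and $d$ lies below that threshold and therefore contributes only $O(d^{1-\alpha})$ to the oracle sum, while the degenerate regime in which no such index exists is precisely the one in which the crude estimate $R(\PVH)\le\tr\Sigma$ already has the right order.
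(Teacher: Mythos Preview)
Your proof is correct and follows essentially the same strategy as the paper's: split into a degenerate case handled by $R(\hat P_{\le d})\le \tr\Sigma$ and a main case where the maximal $d'\le d$ with eigenvalue above a threshold of order $d^{-\alpha}$ satisfies the hypotheses of Theorem~\ref{Theorem}, after which $\sum_{k>d'}\lambda_k$ is bounded by $C(\alpha,K)d^{1-\alpha}$ using that all intermediate eigenvalues lie below the threshold. The only superfluous step is your verification of $d'\le c_1 n$ via $(d')^\alpha\le K/\theta$: since $d'\le d$ by construction and the corollary's constant is taken at most $c_1$, you already have $d'\le d\le c_1 n$ directly.
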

If additionally $\ld_j \geq \PD^{-1} j^{-\alpha}$ for all $j\geq 1$, then we have $\min_{P\in \mathcal{P}_{d}}R(P)=\sum_{k>d}\lambda_k\geq c_3d^{1-\alpha}$ for a constant $c_3>0$, and \eqref{EqCorPD} can be reformulated as $R(\PVH) \leq C_1\min_{P\in \mathcal{P}_{d}}R(P)$. 
\begin{corollary}
\label{CorNED}
Suppose that Assumption \ref{AssSG} holds. Moreover, suppose that for $\alpha,\PD>0$ and $\beta\in (0,1]$ we have $\PD^{-1}\exp(-\alpha j^\beta)\leq \ld_j \leq \PD \exp(-\alpha j^\beta)$ for all $j\geq 1$. Then there are constants $c_1,c_2,C_1,C_2>0$ depending only on $\alpha,\beta,\PD$ and $\SG$ such that, for all $d\leq c_1 n$, with probability at least $1-\exp(-c_2n)$,
\[
R(\PVH) \leq  C_1 d^{1-\beta}\exp(-\alpha (d+1)^\beta)\leq C_2\min_{P \in \mathcal{P}_{d}} R(P).
\]
\end{corollary}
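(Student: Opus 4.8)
The plan is to deduce the corollary from Theorem~\ref{Theorem} by an appropriate choice of the auxiliary dimension $d'$. Recall that $\min_{P\in\mathcal{P}_{d'}}R(P)=\sum_{k>d'}\ld_k$, so once the hypotheses of the theorem are verified for a suitable $d'\leq d$, the bound \eqref{EqTheorem2} yields, with probability at least $1-\exp(-c_2 n)$, that $R(\PVH)\leq A\sum_{k>d'}\ld_k$ with $A=1+C_1(c_1+c_2)$ depending only on $\SG$. It then remains to show that $\sum_{k>d'}\ld_k$ is of order $d^{1-\beta}e^{-\alpha(d+1)^\beta}$ and that the latter is comparable, up to a constant, to $\sum_{k>d}\ld_k=\min_{P\in\mathcal{P}_d}R(P)$.

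First I would record the elementary tail estimates: for $\beta\in(0,1]$ and $a\geq 1$ one has, up to multiplicative constants depending only on $\alpha,\beta$,
\[
\sum_{k>a}e^{-\alpha k^\beta}\ \asymp\ a^{1-\beta}e^{-\alpha(a+1)^\beta},
\]
which follows by comparing the sum with $\int_a^\infty e^{-\alpha x^\beta}\,dx$ and using concavity of $x\mapsto x^\beta$ (so that $(a+c\,a^{1-\beta})^\beta\leq a^\beta+c\beta$) for both directions. Combined with $\PD^{-1}e^{-\alpha j^\beta}\leq \ld_j\leq \PD e^{-\alpha j^\beta}$ this gives $\sum_{k>a}\ld_k\asymp a^{1-\beta}e^{-\alpha(a+1)^\beta}$; in particular, with $a=d$ this already establishes the \emph{second} claimed inequality, $C_1 d^{1-\beta}e^{-\alpha(d+1)^\beta}\leq C_2\min_{P\in\mathcal{P}_d}R(P)$, for a suitable $C_2$ depending only on $\alpha,\beta,\PD,\SG$.

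Next comes the choice of $d'$. I would set $d'=d-\lceil C' d^{1-\beta}\rceil$, where $C'=C'(\alpha,\beta,\PD)\geq 1$ is chosen large enough that, using $(d+1)^\beta-(d')^\beta\geq \beta 2^{\beta-1}C'$ (mean value theorem together with $d+1\leq 2d$ and monotonicity of $x\mapsto\beta x^{\beta-1}$), one has $e^{\alpha((d+1)^\beta-(d')^\beta)}\geq 2\PD^2$; then $\ld_{d'}\geq \PD^{-1}e^{-\alpha(d')^\beta}\geq 2\PD\, e^{-\alpha(d+1)^\beta}\geq 2\ld_{d+1}$, so the spectral-gap hypothesis holds. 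Since $m:=\lceil C' d^{1-\beta}\rceil=o(d)$, for $d$ larger than a constant $d_0(\alpha,\beta,\PD)$ we get $1\leq d'\leq d$ and $d/2\leq d'+1\leq d$; for $d\leq d_0$ the corollary is immediate because its right-hand side is then bounded below by a positive constant, while $R(\PVH)\leq\tr(\hat\Sigma)=n^{-1}\sumn\norm{X_i}^2\leq 2\,\tr(\Sigma)$ with probability at least $1-\exp(-c_2 n)$ by sub-Gaussian (Bernstein-type) concentration of the trace. For the size condition, $d'\leq d\leq c_1 n$, and by the tail estimate together with $(d')^\beta\leq (d+1)^\beta$,
\[
\ld_{d'}^{-1}\sumk\ld_k\ \leq\ \PD^2\,e^{\alpha((d')^\beta-(d+1)^\beta)}\,C_{\alpha,\beta}\,d^{1-\beta}\ \leq\ C_{\alpha,\beta,\PD}\,d\ \leq\ C_{\alpha,\beta,\PD}\,c_1 n,
\]
so after shrinking $c_1$ (by a factor depending only on $\alpha,\beta,\PD$) so that the right-hand side is at most the $c_1$ of Theorem~\ref{Theorem}, both parts of $\max(d',\ld_{d'}^{-1}\sumk\ld_k)\leq c_1 n$ hold.

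Finally, \eqref{EqTheorem2} gives $R(\PVH)\leq A\sum_{k>d'}\ld_k\leq A\,C_{\alpha,\beta,\PD}(d'+1)^{1-\beta}e^{-\alpha(d'+1)^\beta}$ with probability at least $1-\exp(-c_2 n)$. Since $d'+1\leq d$ we have $(d'+1)^{1-\beta}\leq d^{1-\beta}$, and since $(d+1)^\beta-(d'+1)^\beta\leq \beta m\,(d-m+1)^{\beta-1}\leq 2^{2-\beta}\beta C'$ (concavity, using $d-m+1\geq d/2$ and $m\leq 2C'd^{1-\beta}$) we get $e^{-\alpha(d'+1)^\beta}\leq e^{\alpha 2^{2-\beta}\beta C'}e^{-\alpha(d+1)^\beta}$; hence $R(\PVH)\leq C_1 d^{1-\beta}e^{-\alpha(d+1)^\beta}$ for a constant $C_1$ depending only on $\alpha,\beta,\PD,\SG$, which together with the previous step proves the corollary. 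The main obstacle is precisely this balancing in the choice of $d'$: the index shift $d-d'$ must be at least of order $d^{1-\beta}$ in order to open up a fixed spectral gap $\ld_{d'}\geq 2\ld_{d+1}$ (for $\beta<1$ a constant shift no longer suffices as $d$ grows), yet no larger than order $d^{1-\beta}$, so that the extra eigenvalues $\sum_{d'<k\leq d}\ld_k$ do not inflate the remainder trace beyond a constant factor; extracting both properties from the single explicit choice $d'=d-\lceil C'd^{1-\beta}\rceil$ is the crux, the rest being routine integral-comparison estimates.
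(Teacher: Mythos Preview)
Your approach is essentially identical to the paper's: both choose $d'=d-\Theta(d^{1-\beta})$ to create the fixed spectral gap $\ld_{d'}\ge 2\ld_{d+1}$, verify the size conditions of Theorem~\ref{Theorem}, apply \eqref{EqTheorem2}, and then convert $\sum_{k>d'}\ld_k$ to $d^{1-\beta}e^{-\alpha(d+1)^\beta}$ via integral-comparison tail estimates and the concavity of $x\mapsto x^\beta$. One small slip in the small-$d$ case: you write $R(\PVH)\le\tr(\hat\Sigma)$, but $R(\PVH)=\tr\bigl(\Sigma(I-\PVH)\bigr)$, not $\tr\bigl(\hat\Sigma(I-\PVH)\bigr)$; the correct and simpler bound is the deterministic $R(\PVH)\le\tr(\Sigma)$, which removes the need for any concentration argument there.
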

Let us compare these bounds to the Davis--Kahan $\sin \Theta$ theorem, one of the most well-known perturbation bounds for eigenspaces; see e.g. \cite{MR3371006} for a recent
statistical account. Applied to the excess risk (cf. Proposition 2.2 in \cite{ReissWahl}) it gives for every $1\leq t\leq n$, with probability at least
$1-\exp(-t)$,
\begin{equation}
\label{EqDK}
        R(\hat P_{\le d})- \min_{P\in \mathcal{P}_d} R(P) \leq  C\frac{t}{(\lambda_d-\lambda_{d+1})n},
\end{equation}
where $C >0$ is a constant depending only on $L$ and the trace of $\Sigma$. We observe that the upper bound typically increases for fixed sample size $n$ and increasing $d$, whereas the minimal reconstruction error decreases in $d$. For instance, 
if $\ld_j = \PD \exp(-\alpha j^\beta)$, $j\geq 1$, the bound \eqref{EqDK} breaks down for $d$ of size $(\log n)^{1/\beta}$. In contrast,  Corollary \ref{CorNED} does not depend on any gap condition and gives sharp bounds for $d\leq c_1n$.

\section{Proof of  Theorem \ref{Theorem}}
Throughout the proof, we use the letters $c,C>0$ for constants depending only on $\SG$ that may change from line to line. We start with formulating a more technical version of our main result in terms of the weighted covariance operator 
\[
\Sigma_{d'}:= S_{\leq d'}\Sigma S_{\leq d'}\quad \text{where} \quad S_{\leq d'} = \sum_{j\leq d'} (\ld_j - \ld_{d+1})^{-1/2}P_j  \quad\text{and}\quad d'\leq d.
\] 
Note that $S_{\leq d'}$ can be interpreted as the square-root of a (partial) reduced resolvent.
For this operator it holds that
\begin{align}\begin{aligned}
\| \Sigma_{d'}\| _{\infty} =& \frac{\ld_{d'}}{\ld_{d'} - \ld_{d+1}},\quad\tr(\Sigma_{d'}) = \sum_{j \leq d'} \frac{\ld_j}{\ld_j - \ld_{d+1}},\\
&\| \Sigma_{d'}\| ^2_{2} = \sum_{j \leq d'} \frac{\ld_j^2}{(\ld_j-\ld_{d+1})^2},
\end{aligned}\end{align}
where $\| \cdot\| _{\infty}$ and $\| \cdot\| _{2}$ denote the operator norm and the Hilbert--Schmidt norm, respectively. Using these quantities, we can express our main result as follows.
\begin{theorem}
\label{techThm}
Under Assumption \ref{AssSG}, there are constants $c_1',c_2',C_1'>0$ depending only on $\SG$ such that the following holds. For all natural numbers $d' \leq d$ satisfying
\begin{equation}
 \label{AssEV}
\| \Sigma_{d'}\| _{\infty} \tr(\Sigma_{d'})+\| \Sigma_{d'}\| _{\infty}\sumk \frac{\ld_k}{\ld_{d'}-\ld_k} \leq c_1'n,
\end{equation}
we have, for all $1\leq t\leq c_2'n/\| \Sigma_{d'}\| _{\infty}^2$, with probability at least $1-\exp(-t)$,
\begin{equation}
\label{EqtechThm}
R(\PVH) \leq \left( 1 + C_1'\left(\| \Sigma_{d'}\| _{\infty}\tr(\Sigma_{d'})\frac{1}{n}+\| \Sigma_{d'}\| ^2_{\infty} \frac{t}{n}\right)\right) \sum_{k > d'} \ld_k.
\end{equation}
\end{theorem}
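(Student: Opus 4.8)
\emph{Step 1 (reduction to a cross term).}
Since $I-\PVH=\sum_{l>d}\hat P_l$ and $\sum_{l\geq1}\langle u_m,\hat u_l\rangle^2=1$ for every $m$, I would first write
\[
R(\PVH)=\tr\bigl(\Sigma(I-\PVH)\bigr)=\sum_{l>d}\sum_{m\geq1}\lambda_m\langle u_m,\hat u_l\rangle^2\leq\sum_{m>d'}\lambda_m+C,
\qquad C:=\sum_{j\leq d'}\lambda_j\sum_{l>d}\langle u_j,\hat u_l\rangle^2,
\]
using $\sum_{l>d}\sum_{m>d'}\lambda_m\langle u_m,\hat u_l\rangle^2\leq\sum_{m>d'}\lambda_m$. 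Here $C=\sum_{j\leq d'}\lambda_j\norm{(I-\PVH)u_j}^2$ measures how much the empirical residual space leaks into the leading true eigendirections, and everything reduces to showing that, on an event of probability at least $1-e^{-t}$, $C\leq C_1'\bigl(\norm{\Sigma_{d'}}_\infty\tr(\Sigma_{d'})/n+\norm{\Sigma_{d'}}_\infty^2 t/n\bigr)\sum_{k>d'}\lambda_k$.

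\emph{Step 2 (a self-consistent inequality).}
Set $\Delta:=\hat\Sigma-\Sigma$ and $P_{>d'}:=\sum_{k>d'}P_k$. Testing $\hat\Sigma\hat u_l=\hat\lambda_l\hat u_l$ against $u_j$ gives $(\lambda_j-\hat\lambda_l)\langle u_j,\hat u_l\rangle=\langle u_j,(\Sigma-\hat\Sigma)\hat u_l\rangle$ for $j\leq d'$, $l>d$, which, provided $\hat\lambda_{d+1}<\lambda_{d'}$, yields
\[
(I-\PVH)u_j=-\,(I-\PVH)(\hat\Sigma-\lambda_j)^{-1}(I-\PVH)\,\Delta u_j .
\]
Decomposing $\Delta u_j=\sum_{m\leq d'}\langle u_m,\Delta u_j\rangle u_m+P_{>d'}\Delta u_j$, estimating the reduced resolvent by $(\lambda_j-\hat\lambda_{d+1})^{-1}$, rescaling the $j$-th coordinate by $\lambda_j^{1/2}$ and passing to the Hilbert--Schmidt norm, one is led to a fixed-point inequality of the shape
\[
\norm{\mathcal{G}}_2\leq\norm{\mathcal{G}_0}_2+\norm{S_{\leq d'}\Delta S_{\leq d'}}_\infty\norm{\mathcal{G}}_2 ,
\]
where $\mathcal{G}=(I-\PVH)\Sigma^{1/2}P_{\leq d'}$ so that $\norm{\mathcal{G}}_2^2=C$; $\mathcal{G}_0$ is a ``source'' operator built only from the tails $P_{>d'}\Delta u_j$, with $\norm{\mathcal{G}_0}_2^2\asymp\sum_{j\leq d'}\tfrac{\lambda_j}{(\lambda_j-\lambda_{d+1})^2}\norm{P_{>d'}\Delta u_j}^2$; and the feedback coefficient is the operator norm of the weighted perturbation $S_{\leq d'}\Delta S_{\leq d'}=\widehat{\Sigma_{d'}}-\Sigma_{d'}$, the centered sample covariance of the i.i.d.\ vectors $\zeta_i:=S_{\leq d'}X_i$, which are sub-Gaussian with covariance $\Sigma_{d'}$. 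The key point of this step is to organise the expansion so that only the \emph{operator} norm of $S_{\leq d'}\Delta S_{\leq d'}$ enters as the feedback coefficient, rather than its Hilbert--Schmidt norm (which would lose a factor $\sqrt{d'}$ and force a condition involving $\tr(\Sigma_{d'})^2$).

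\emph{Step 3 (the good event).}
Introduce the event $\mathcal{E}$ on which (i) $\lambda_j-\hat\lambda_{d+1}\geq\tfrac12(\lambda_j-\lambda_{d+1})$ for all $j\leq d'$; (ii) $\norm{S_{\leq d'}\Delta S_{\leq d'}}_\infty\leq\tfrac12$; (iii) $\norm{\mathcal{G}_0}_2^2\leq C_1'\bigl(\norm{\Sigma_{d'}}_\infty\tr(\Sigma_{d'})/n+\norm{\Sigma_{d'}}_\infty^2 t/n\bigr)\sum_{k>d'}\lambda_k$. For (i), use $\hat\lambda_{d+1}\leq\norm{P_{>d}\hat\Sigma P_{>d}}_\infty$ and a standard sub-Gaussian operator-norm deviation bound for the sample covariance of $\{P_{>d}X_i\}$ (population covariance $P_{>d}\Sigma P_{>d}$, effective rank $\sum_{k>d}\lambda_k/\lambda_{d+1}$); this succeeds precisely when the tail effective rank over $n$ is small, which is what the second summand of \eqref{AssEV}, $\norm{\Sigma_{d'}}_\infty\sum_{k>d}\lambda_k/(\lambda_{d'}-\lambda_k)$, guarantees (together with $t\leq c_2'n/\norm{\Sigma_{d'}}_\infty^2$). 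For (ii), the Koltchinskii--Lounici / Vershynin bound for sub-Gaussian sample covariances gives, with probability at least $1-e^{-t}$,
\[
\norm{S_{\leq d'}\Delta S_{\leq d'}}_\infty\;\lesssim_L\;\sqrt{\tfrac{\norm{\Sigma_{d'}}_\infty\tr(\Sigma_{d'})+\norm{\Sigma_{d'}}_\infty^2 t}{n}}+\tfrac{\tr(\Sigma_{d'})+\norm{\Sigma_{d'}}_\infty t}{n},
\]
which is $\leq\tfrac12$ once \eqref{AssEV} holds with $c_1'$ small and $t\leq c_2'n/\norm{\Sigma_{d'}}_\infty^2$. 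For (iii), $\norm{\mathcal{G}_0}_2^2\asymp\sum_{j\leq d'}\sum_{k>d'}\tfrac{\lambda_j^2\lambda_k}{(\lambda_j-\lambda_{d+1})^2}\bigl(n^{-1}\sum_i\eta_j^{(i)}\eta_k^{(i)}\bigr)^2$ is a nonnegative polynomial chaos of degree $\leq4$ in the Karhunen--Lo\`eve coefficients, with expectation $\asymp n^{-1}\norm{\Sigma_{d'}}_2^2\sum_{k>d'}\lambda_k\leq n^{-1}\norm{\Sigma_{d'}}_\infty\tr(\Sigma_{d'})\sum_{k>d'}\lambda_k$; a Hanson--Wright / Bernstein-type deviation inequality delivers the stated high-probability bound, the $\norm{\Sigma_{d'}}_\infty^2 t/n$-term arising because the largest coordinate weight $(\lambda_j/(\lambda_j-\lambda_{d+1}))^2$ can be as large as $\norm{\Sigma_{d'}}_\infty^2$.

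\emph{Conclusion and main obstacle.}
On $\mathcal{E}$ the fixed-point inequality of Step 2 with feedback $\leq\tfrac12$ gives $\norm{\mathcal{G}}_2\leq2\norm{\mathcal{G}_0}_2$, hence $C=\norm{\mathcal{G}}_2^2\leq4\norm{\mathcal{G}_0}_2^2$, and together with Step 1 this is \eqref{EqtechThm}; moreover $\Pro(\mathcal{E})\geq1-e^{-t}$ by Step 3 after adjusting constants. I expect the main obstacle to be Step 2: arranging the perturbation expansion so that the relevant feedback quantity is exactly $\norm{S_{\leq d'}\Delta S_{\leq d'}}_\infty$ (an operator norm, computable by ordinary sub-Gaussian covariance concentration), and matching it with a correspondingly sharp — Hilbert--Schmidt-flavoured rather than operator-norm-flavoured — control of the chaos $\norm{\mathcal{G}_0}_2^2$ in Step 3(iii); a secondary difficulty is to define $\mathcal{E}$ so that establishing the separation $\hat\lambda_{d+1}<\lambda_{d'}$ in (i) consumes no more than the second summand of \eqref{AssEV}.
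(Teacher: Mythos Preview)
Your proposal is correct and follows essentially the same three-part architecture as the paper: a deterministic perturbation bound valid on the good event $\{\|S_{\leq d'}\Delta S_{\leq d'}\|_\infty\leq 1/4\}\cap\{\hat\lambda_{d+1}-\lambda_{d+1}\leq(\lambda_{d'}-\lambda_{d+1})/2\}$, yielding $R(\PVH)\leq 16\|\Sigma_{d'}\|_\infty\|S_{\leq d'}\Delta P_{>d'}\|_2^2+\sum_{k>d'}\lambda_k$ (the paper imports this from Rei\ss--Wahl rather than re-deriving it as you do in Step~2); a probability lower bound for this event (also imported, your Step~3(i)(ii)); and a Hanson--Wright concentration inequality for $\|S_{\leq d'}\Delta P_{>d'}\|_2^2$, which the paper obtains from the Hilbert-space version of Adamczak--Lata\l a--Meller by explicitly computing the parameters $U_1,U_2,V$ (your Step~3(iii)). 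One small correction to your sketch: to make the feedback coefficient in the fixed-point inequality of Step~2 come out as exactly $\|S_{\leq d'}\Delta S_{\leq d'}\|_\infty$ (without an extra $\|\Sigma_{d'}\|_\infty^{1/2}$), the natural rescaling of the $j$-th column is by $(\lambda_j-\lambda_{d+1})^{1/2}$ rather than $\lambda_j^{1/2}$; the factor $\|\Sigma_{d'}\|_\infty$ then reappears when converting $\sum_{j\leq d'}(\lambda_j-\lambda_{d+1})\|(I-\PVH)u_j\|^2$ back to your $C=\sum_{j\leq d'}\lambda_j\|(I-\PVH)u_j\|^2$, matching the $16\|\Sigma_{d'}\|_\infty$ in the paper's lemma.
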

Let us first show how  Theorem \ref{techThm} implies  Theorem \ref{Theorem}. First, Condition \eqref{AssEV} is implied by the assumptions of Theorem \ref{Theorem}, provided that $c_1=c_1'/8$, as can be seen by using that $\| \Sigma_{d'}\| _{\infty}\leq 2$ and $\tr(\Sigma_{d'}) \leq 2d'$. 
Since additionally $\sum_{k > d'}\ld_k = \min_{P\in \mathcal{P}_{d'}}R(P)$, \eqref{EqTheorem1} follows from \eqref{EqtechThm}. 
To prove Theorem \ref{techThm}, we will need three technical statements.
The following perturbation bound follows from the proofs of Proposition 3.5 and Theorem 2.12 in Rei\ss{} and Wahl \cite{ReissWahl} (see equations (3.9) and
(3.14) applied with $\mu = \ld_{d+1}$ and $r = s = d'$).
\begin{lemma}
\label{lem:boundRW}
Suppose that the assumptions of Theorem \ref{techThm} hold. Then, on the joint event
\begin{equation}
\label{EventRW}
 \{\| S_{\leq d'}(\Sigma - \hat{\Sigma})S_{\leq d'}\| _{\infty} \leq 1/4\} \cap \{\hat{\ld}_{d+1} - \ld_{d+1} \leq (\ld_{d'}- \ld_{d+1})/2\},
\end{equation}
we have
\[
 R(\PVH) \leq 16 \| \Sigma_{d'}\| _{\infty}\| S_{\leq d'}(\Sigma - \hat{\Sigma})P_{>d'}\| ^2_2 + \sum_{k>d'}\ld_k.
\]
\end{lemma}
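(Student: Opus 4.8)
\emph{Proof sketch.} The statement is purely deterministic: it concerns only the self-adjoint operators $\Sigma$ and $\hat\Sigma$ on the event \eqref{EventRW}, and the plan is to run a relative eigenspace-perturbation argument in the spirit of Rei\ss{} and Wahl \cite{ReissWahl}. First I would rewrite the excess reconstruction error relative to $\mathcal P_{d'}$ as a trace. Since $\sum_{k>d'}\lambda_k=R(P_{\leq d'})$ with $P_{\leq d'}:=\sum_{j\leq d'}P_j$, we have $R(\PVH)-\sum_{k>d'}\lambda_k=\tr(\Sigma(P_{\leq d'}-\PVH))$; inserting the free shift $\lambda_{d+1}$, which changes the trace only by $\lambda_{d+1}(d'-d)\leq0$, and expanding in the eigenvectors $u_l$ yields the bound $R(\PVH)-\sum_{k>d'}\lambda_k\leq\sum_{j\leq d'}(\lambda_j-\lambda_{d+1})\norm{(I-\PVH)u_j}^2-\sum_{k>d'}(\lambda_k-\lambda_{d+1})\norm{\PVH u_k}^2$. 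This is a \emph{difference of leakages}: the first sum measures how strongly the top-$d'$ true eigenspace leaks out of the empirical rank-$d$ subspace, while the second, signed sum collects the contributions of the possibly near-degenerate band $d'<k\leq d$ and of the tail $k>d+1$, which cancel to leading order. It is this cancellation that lets the estimate survive with no assumption on $\lambda_d-\lambda_{d+1}$ (only $\lambda_{d'}>\lambda_{d+1}$ is needed, and this is implicit in \eqref{AssEV}).

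Next I would remove the mismatch between the rank $d$ of $\PVH$ and the ``analysis rank'' $d'$. The second event in \eqref{EventRW} gives $\hat\lambda_{d+1}\leq\theta:=(\lambda_{d'}+\lambda_{d+1})/2$, so the spectral projector $\hat Q$ of $\hat\Sigma$ onto eigenvalues exceeding $\theta$ satisfies $\mathrm{range}(\hat Q)\subseteq\mathrm{span}(\hat u_1,\dots,\hat u_d)=\mathrm{range}(\PVH)$, whence $I-\PVH\leq I-\hat Q$ and one may replace $\PVH$ by $\hat Q$ in the nonnegative leakage terms at no cost. The gain is that $\hat Q$ enjoys a genuine spectral gap: all eigenvalues of $\hat\Sigma$ on $\mathrm{range}(I-\hat Q)$ are $\leq\theta$, whereas $\lambda_1,\dots,\lambda_{d'}\geq\lambda_{d'}>\theta$, the separation being $(\lambda_{d'}-\lambda_{d+1})/2$.

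The heart of the argument is then a weighted Davis--Kahan/Sylvester bound comparing $\hat Q$ with $P_{\leq d'}$. I would write the Sylvester equation for $(I-\hat Q)P_{\leq d'}$ with driving term $\Sigma-\hat\Sigma$, conjugate everything by $S_{\leq d'}$ so that the reduced operator $\Sigma_{d'}=S_{\leq d'}\Sigma S_{\leq d'}$ and the reduced perturbation $S_{\leq d'}(\Sigma-\hat\Sigma)S_{\leq d'}$ appear, and use the gap $(\lambda_{d'}-\lambda_{d+1})/2$ to bound the weighted leakage by a universal constant times $\norm{\Sigma_{d'}}_\infty\norm{S_{\leq d'}(\Sigma-\hat\Sigma)P_{>d'}}_2^2$ plus a remainder in which $S_{\leq d'}(\Sigma-\hat\Sigma)S_{\leq d'}$ acts back on the leakage itself. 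On the first event $\norm{S_{\leq d'}(\Sigma-\hat\Sigma)S_{\leq d'}}_\infty\leq1/4$ this remainder is absorbed by a Neumann-series (fixed-point) step, at the cost of a factor of order $(1-1/4)^{-2}$; together with the factors coming from the midpoint choice of $\theta$, this produces the constant $16$.

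I expect the last step to be the only genuinely delicate one: tracking the weight $S_{\leq d'}$ through the Sylvester identity so that exactly the off-diagonal block $S_{\leq d'}(\Sigma-\hat\Sigma)P_{>d'}$ survives while the diagonal block folds into a convergent geometric series, and, in parallel, controlling the near-degenerate band $d'<k\leq d$ inside the difference of leakages from the first step. This is precisely the computation carried out in equations (3.9) and (3.14) of \cite{ReissWahl} (there stated for general parameters $\mu$, $r$, $s$), applied here with $\mu=\lambda_{d+1}$ and $r=s=d'$; so in the final write-up one simply quotes those results rather than reproducing the derivation, which is the route indicated in the statement of the lemma.
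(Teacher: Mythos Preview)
Your proposal is correct and takes essentially the same approach as the paper: the paper gives no self-contained proof but simply cites equations (3.9) and (3.14) of Rei\ss{} and Wahl \cite{ReissWahl} with $\mu=\lambda_{d+1}$ and $r=s=d'$, which is exactly where your sketch lands in its final paragraph. Your intermediate outline (trace identity with the $\lambda_{d+1}$-shift, replacing $\PVH$ by the spectral projector $\hat Q$ above the midpoint $\theta$, weighted Sylvester estimate absorbed via the first event) is a faithful summary of what those equations encode, so nothing is missing.
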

The event in \eqref{EventRW} has been analyzed in Lemma 3.9 and Corollary 3.12 in Rei\ss{} and Wahl \cite{ReissWahl}, yielding the following lemma.
\begin{lemma}
\label{lem:pboundW}
Under the assumptions of Theorem \ref{techThm}, we have
{\small \begin{align*}
  \Pro&\left(\{\| S_{\leq d'}(\Sigma - \hat{\Sigma})S_{\leq d'}\| _{\infty} \leq 1/4\} \cap \{\hat{\ld}_{d+1} - \ld_{d+1} \leq (\ld_{d'}- \ld_{d+1})/2\}\right)\\
   &\hspace{7cm} \geq 1 - 2 \exp\left(-c_2'n/\| \Sigma_{d'}\| ^2_{\infty}\right).
\end{align*}}
\end{lemma}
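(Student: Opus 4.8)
My plan is to split the probability by a union bound over the two complementary events, showing that each of
\[
\Pro\bigl(\norm{S_{\leq d'}(\Sigma-\hat{\Sigma})S_{\leq d'}}_\infty > 1/4\bigr)\quad\text{and}\quad\Pro\bigl(\hat{\ld}_{d+1}-\ld_{d+1} > (\ld_{d'}-\ld_{d+1})/2\bigr)
\]
is at most $\exp(-c_2'n/\norm{\Sigma_{d'}}_\infty^2)$. In both cases the relevant quantity is the deviation of an empirical second-moment operator of sub-Gaussian vectors, measured in a norm rescaled by a (partial) reduced resolvent, so the common strategy is: identify the rescaled sample-covariance operator, read off its operator norm and trace, feed these into a standard sub-Gaussian sample-covariance deviation inequality (of the type used in \cite{ReissWahl}) with $t=c_2'n/\norm{\Sigma_{d'}}_\infty^2$, and then check that \eqref{AssEV} makes the resulting bound small enough.

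For the first event I would set $Y_i=S_{\leq d'}X_i$, so that $S_{\leq d'}\hat{\Sigma}S_{\leq d'}=n^{-1}\sumn Y_i\otimes Y_i$ with the $Y_i$ i.i.d., centred, supported on $\mathrm{span}(u_1,\dots,u_{d'})$, of covariance $\Sigma_{d'}$, and sub-Gaussian with parameter controlled by $\SG$ through Assumption \ref{AssSG}. The deviation inequality then gives, for $t\geq 1$, on an event of probability at least $1-\exp(-t)$,
\[
\norm{S_{\leq d'}(\Sigma-\hat{\Sigma})S_{\leq d'}}_\infty\leq C\Bigl(\sqrt{\tfrac{\norm{\Sigma_{d'}}_\infty\tr(\Sigma_{d'})}{n}}+\tfrac{\tr(\Sigma_{d'})}{n}+\norm{\Sigma_{d'}}_\infty\sqrt{\tfrac{t}{n}}+\norm{\Sigma_{d'}}_\infty\tfrac{t}{n}\Bigr).
\]
Taking $t=c_2'n/\norm{\Sigma_{d'}}_\infty^2$ (which is $\geq 1$ for suitable constants, since $\norm{\Sigma_{d'}}_\infty^2\leq\norm{\Sigma_{d'}}_\infty\tr(\Sigma_{d'})\leq c_1'n$ by \eqref{AssEV}), and using \eqref{AssEV} together with $\norm{\Sigma_{d'}}_\infty\geq 1$, every term on the right is a fixed multiple of at most $\sqrt{c_1'}+c_1'+\sqrt{c_2'}+c_2'$, hence at most $1/4$ once $c_1',c_2'$ are small. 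This settles the first event.

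For the second event I would first reduce deterministically. By the Courant--Fischer principle applied with the test space $\mathrm{span}(u_1,\dots,u_d)$, if $\hat{\ld}_{d+1}>\mu:=(\ld_{d'}+\ld_{d+1})/2$ then the $(d+1)$-dimensional top eigenspace of $\hat{\Sigma}$ contains a unit vector $x$ orthogonal to $u_1,\dots,u_d$, and then $\langle\hat{\Sigma}x,x\rangle\geq\hat{\ld}_{d+1}>\mu$ while $\langle\Sigma x,x\rangle\leq\ld_{d+1}$, so $\langle(\hat{\Sigma}-\Sigma)x,x\rangle>(\ld_{d'}-\ld_{d+1})/2$. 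Thus the bad event is contained in ``the tail block of $\hat{\Sigma}-\Sigma$ is large along a direction orthogonal to the top-$d$ eigenspace of $\Sigma$'', and I would rescale by the tail reduced resolvent $R:=\sumk(\ld_{d'}-\ld_k)^{-1/2}P_k$, turning this into a deviation bound for the empirical covariance of $RX_i$; the covariance $R\Sigma R=\sumk\tfrac{\ld_k}{\ld_{d'}-\ld_k}P_k$ has operator norm $\ld_{d+1}/(\ld_{d'}-\ld_{d+1})\leq\norm{\Sigma_{d'}}_\infty$ and trace $\sumk\ld_k/(\ld_{d'}-\ld_k)$, which weighted by $\norm{\Sigma_{d'}}_\infty$ is exactly the second term in \eqref{AssEV}. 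Combining this with the control of the top-$d'$ block from the first event one concludes the stated bound; since this step is precisely Lemma 3.9 and Corollary 3.12 of \cite{ReissWahl}, I would simply invoke those results.

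The step I expect to be the main obstacle is the second event. A bare Weyl bound $\hat{\ld}_{d+1}-\ld_{d+1}\leq\norm{\hat{\Sigma}-\Sigma}_\infty$, or even its localisation to the block $P_{>d}(\hat{\Sigma}-\Sigma)P_{>d}$, is too lossy: it produces a fluctuation of order $\sqrt{\ld_{d+1}\sumk\ld_k/n}$, which need not be small compared with the gap $\ld_{d'}-\ld_{d+1}$ precisely when that gap is small, i.e.\ when $\norm{\Sigma_{d'}}_\infty$ is large. The reduced-resolvent reweighting is designed to trade this term for one governed by $\norm{\Sigma_{d'}}_\infty\sumk\ld_k/(\ld_{d'}-\ld_k)$, which is the quantity budgeted for in \eqref{AssEV}; carrying out this substitution and keeping precise track of all the powers of $\norm{\Sigma_{d'}}_\infty$ --- so that the final probability comes out as $1-\exp(-c_2'n/\norm{\Sigma_{d'}}_\infty^2)$ rather than something weaker --- is the delicate point, and is where the tailored perturbation analysis of \cite{ReissWahl} is genuinely needed rather than off-the-shelf matrix concentration.
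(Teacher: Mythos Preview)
Your proposal is correct and follows the same route as the paper: both treat the two events separately and attribute the bounds to Lemma 3.9 and Corollary 3.12 in \cite{ReissWahl}. The paper in fact gives no argument beyond that citation, whereas you sketch the underlying mechanism (sub-Gaussian sample-covariance deviation for the first event, a dimension/Courant--Fischer reduction plus tail reduced-resolvent reweighting for the second), which matches what those cited results do.
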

The next step is to prove a concentration inequality for $\| S_{\leq d'} (\hat\Sigma - \Sigma)P_{>d'}\| _2$. It can be derived from a Hilbert space version of the Hanson--Wright inequality derived in Adamczak, Lata\l a and Meller 
\cite{ALM}. Alternatively, one can also multiply out the Hilbert--Schmidt norm, resulting in a polynomial chaos of degree
$4$ in sub-Gaussian random variables, and then apply Theorem 1.4 in \cite{AdamczakWolff}.
\begin{lemma}
\label{LemCI}
Under the assumptions of Theorem \ref{techThm}, we have, for all $t> 0$, with probability at least $1-2\exp(-t)$,
\begin{equation}
\label{EqLemCI}
 \| S_{\leq d'} (\hat\Sigma - \Sigma)P_{>d'}\| _2^2\leq  C \left( \tr(\Sigma_{d'})\frac{1}{n} + \| \Sigma_{d'}\| _{\infty} \frac{t}{n} + \| \Sigma_{d'}\| _{\infty} \frac{t^2}{n^2}\right)\sum_{k>d'}\ld_k.
\end{equation}
\end{lemma}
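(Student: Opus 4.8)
The plan is to recast the left‑hand side of \eqref{EqLemCI} as a polynomial chaos of degree $4$ in the independent sub‑Gaussian variables $\eta_{ij}=\ld_j^{-1/2}\langle X_i,u_j\rangle$ and then apply a vector‑valued Hanson--Wright‑type inequality. Writing $X_i=\sum_{j\geq1}\ld_j^{1/2}\eta_{ij}u_j$ and using that $\Sigma$ is diagonal in the basis $(u_j)$, one sees that $S_{\leq d'}\Sigma P_{>d'}=0$, so that
\[
V:=S_{\leq d'}(\hat\Sigma-\Sigma)P_{>d'}=S_{\leq d'}\hat\Sigma P_{>d'}=\frac1n\sumn Y_i\otimes W_i,\qquad Y_i:=S_{\leq d'}X_i,\quad W_i:=P_{>d'}X_i,
\]
with the families $(Y_i)_{i\leq n}$ and $(W_i)_{i\leq n}$ independent. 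Hence $\|V\|_2^2=n^{-2}\tr(G^YG^W)$ for the independent positive semidefinite Gram matrices $G^Y=(\langle Y_i,Y_{i'}\rangle)_{i,i'\leq n}$ and $G^W=(\langle W_i,W_{i'}\rangle)_{i,i'\leq n}$. Since $\E\eta_{ij}=0$, $\E\eta_{ij}^2=1$, and $j\leq d'<k$ forces $j\neq k$, one computes $\E G^Y=\tr(\Sigma_{d'})I_n$ and $\E G^W=(\sum_{k>d'}\ld_k)I_n$; in particular $\E\|V\|_2^2=n^{-1}\tr(\Sigma_{d'})\sum_{k>d'}\ld_k$, which is the first term on the right‑hand side of \eqref{EqLemCI}.

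For the deviations I would apply either the Hilbert‑space Hanson--Wright inequality of Adamczak--Latała--Meller to
\(V=\frac1n\sumn\sum_{j\leq d'}\sum_{k>d'}\sqrt{\ld_j/(\ld_j-\ld_{d+1})}\,\ld_k^{1/2}\,\eta_{ij}\eta_{ik}\,u_j\otimes u_k\),
which is a \emph{centered} (no diagonal term survives, since $S_{\leq d'}\Sigma P_{>d'}=0$) degree‑$2$ chaos in the $\eta_{ij}$ with Hilbert--Schmidt‑operator coefficients, or Theorem 1.4 of Adamczak--Wolff directly to the degree‑$4$ chaos
\[
\|V\|_2^2=\frac1{n^2}\sum_{i,i'\leq n}\ \sum_{j\leq d'}\sum_{k>d'}\frac{\ld_j\ld_k}{\ld_j-\ld_{d+1}}\,\eta_{ij}\eta_{ik}\eta_{i'j}\eta_{i'k}.
\]
Both routes produce, for every integer $q\geq1$, a bound of the form $\bigl(\E\|V\|_2^{2q}\bigr)^{1/q}\leq C\bigl(\E\|V\|_2^2+B_1q+B_2q^2\bigr)$, where $B_1,B_2$ are partition norms (of Frobenius and operator type) of the coefficient tensor and $C$ depends only on $\SG$; Markov's inequality applied at $q\asymp t$ then yields \eqref{EqLemCI}, provided one can show $B_1\leq C\|\Sigma_{d'}\|_\infty n^{-1}\sum_{k>d'}\ld_k$ and $B_2\leq C\|\Sigma_{d'}\|_\infty n^{-2}\sum_{k>d'}\ld_k$.

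This last identification is the step I expect to be the most laborious, as it amounts to tracking every cross term of the chaos and verifying, after AM--GM splittings, that each of them is dominated by one of the three terms of \eqref{EqLemCI}. The ingredients are elementary: $x\mapsto x/(x-\ld_{d+1})$ is decreasing on $(\ld_{d+1},\infty)$, so that $\max_{j\leq d'}\ld_j/(\ld_j-\ld_{d+1})=\|\Sigma_{d'}\|_\infty$ and hence $\|\Sigma_{d'}\|_2^2=\sum_{j\leq d'}\ld_j^2/(\ld_j-\ld_{d+1})^2\leq\|\Sigma_{d'}\|_\infty\tr(\Sigma_{d'})$; moreover $\|\Sigma_{d'}\|_\infty\leq\tr(\Sigma_{d'})$ and $\ld_{d'+1}\leq\sum_{k>d'}\ld_k$; and, to absorb the genuinely higher‑order (in $q$) contributions that matter once $q$ exceeds $n$, the size condition \eqref{AssEV}. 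A fully self‑contained alternative, avoiding vector‑valued concentration altogether, is to condition on $(W_i)_{i\leq n}$ in
\[
\|V\|_2^2=n^{-2}\tr(G^YG^W)=\frac1{n^2}\sum_{i,i'\leq n}\langle W_i,W_{i'}\rangle\sum_{j\leq d'}\frac{\ld_j}{\ld_j-\ld_{d+1}}\eta_{ij}\eta_{i'j}:
\]
given $(W_i)_{i\leq n}$ this is a positive semidefinite scalar quadratic form in the sub‑Gaussian vector $(\eta_{ij})_{i\leq n,\,j\leq d'}$, to which the classical Hanson--Wright inequality applies, and one then takes $L^q$‑norms over $(W_i)_{i\leq n}$ — again by Hanson--Wright, now for $\tr(G^W)$, $\|G^W\|_2$ and $\|G^W\|_\infty$ — to arrive at the same estimate.
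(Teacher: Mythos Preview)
Your primary route---writing $S_{\leq d'}(\hat\Sigma-\Sigma)P_{>d'}$ as a centered degree-two chaos in the $\eta_{ij}$ with Hilbert--Schmidt coefficients and invoking the vector-valued Hanson--Wright inequality of Adamczak--Lata{\l}a--Meller---is exactly the paper's proof, and your identification of the mean term, of the target bounds on the two partition norms, and of the role of condition \eqref{AssEV} in absorbing the extra $\tr(\Sigma_{d'})\,t/n^2$ contribution are all correct. The paper simply carries out explicitly the step you flag as ``laborious'': it computes the three norms $U_1,U_2,V$ of the coefficient array in closed form, obtains four terms, and drops one via \eqref{AssEV}; your conditioning alternative is not in the paper but would also work.
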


\begin{proof}
The main idea of the proof is that we can rewrite the left-hand side in \eqref{EqLemCI} in terms of the squared Hilbert--Schmidt norm of a quadratic form in the Karhunen--Lo{\`e}ve coefficients $\eta_{ij}:= \ld_j^{-1/2} \langle X_i, u_j \rangle$ with values being Hilbert-Schmidt operators. Indeed, we have
\[
S_{\leq d'} (\Sigma - \hat{\Sigma})P_{>d'} = \sum_{j\leq d'} \sum_{k>d'} \sum_{i=1}^n \biggl\{ \frac{1}{n}\sqrt{\frac{\ld_j\ld_k}{\ld_j -\ld_{d+1}}}u_j \otimes u_k \biggr\} \eta_{ij} \eta_{ik}.
\]
By Assumption \ref{AssSG}, the $\eta_{ij}$, $(i,j)\in \{1,\dots,n\}\times \N:= \tilde{\N}$, are independent, centered and sub-Gaussian random variables, meaning that we can apply Corollary 16
in \cite{ALM} (with the norm being the Hilbert--Schmidt norm) to the quadratic form
$\sum_{(i_1,j_1), (i_2, j_2) \in \tilde{\N}}a_{(i_1,j_1), (i_2,j_2)}\eta_{i_1j_1}\eta_{i_2j_2}$ with Hilbert--Schmidt operators
\begin{align*}
a_{(i_1,j_1), (i_2,j_2)} &= \frac{1}{2n} \left\{ \sqrt{\frac{\ld_{j_1}\ld_{j_2}}{\ld_{j_1} -\ld_{d+1}}} (u_{j_1} \otimes u_{j_2}) \1_{\{j_1\leq d',j_2>d'\}}\right.\\
&\left.\hspace{1.1cm} +  \sqrt{\frac{\ld_{j_2}\ld_{j_1}}{\ld_{j_2} -\ld_{d+1}}}(u_{j_2}\otimes u_{j_1})\1_{\{j_1>d',j_2\leq d'\}}\right\}\1_{\{i_1=i_2\}}.
\end{align*}
Note that, while  Corollary 16 in \cite{ALM} is formulated for finite index sets, it extends to our setting using e.g.~that the $a_{(i_1,j_1),(i_2,j_2)}$ are summable.
Therefore, we consider $x = (x_{(i,j)})_{(i,j) \in  \tilde{\N}}$ such that each component $x_{(i,j)} \in \R$. Similarly, we define $(x_{(i_1, j_1), (i_2, j_2)})_{(i_1, j_1), (i_2, j_2) \in \tilde{\N}}$.\newpage Exploiting the particular diagonal structure of
$(a_{(i_1,j_1),(i_2,j_2)})$, we compute, using the Cauchy--Schwarz inequality,
\begin{align*}
U_1 :=& \sup_{\| x\| _2\leq 1}\sqrt{\sum_{(i_1,j_1)}\Big\|\sum_{(i_2,j_2)}a_{(i_1,j_1),(i_2, j_2)} x_{(i_2,j_2)}\Big\|_2^2}\\
=& \, \frac{1}{2n}\sqrt{\max\Big\{\tr(\Sigma_{d'})\ld_{d'+1}\, , \, \| \Sigma_{d'}\| _{\infty} \sum_{k>d'}\ld_k\Big\}},\\
U_2 :=& \sup_{\big\| (x_{(i_1, j_1), (i_2, j_2)})\big\|_2\leq 1} \Big\| \sum_{(i_1,j_1), (i_2,j_2)} a_{(i_1,j_1), (i_2,j_2)} x_{(i_1,j_1), (i_2,j_2)}\Big\|_2\\
=&\, \frac{1}{\sqrt{2n}}\sqrt{\| \Sigma_{d'}\| _{\infty}\ld_{d'+1}},\\
V :=& \sup_{\| x\| _2\leq 1, \| y\| _2\leq 1} {\Big\| \sum_{(i_1,j_1), (i_2,j_2)} a_{(i_1,j_1), (i_2,j_2)} x_{(i_1,j_1)}y_{(i_2,j_2)}\Big\|}_2\\
 =&\, \frac{1}{\sqrt{2}}\frac{1}{n}\sqrt{\| \Sigma_{d'}\| _{\infty} \ld_{d'+1}},
\end{align*}
where $\| x\|_2^2 = \sum_{(i,j)\in \tilde{\N}} x_{(i,j)}^2$ and $\| (x_{(i_1, j_1), (i_2, j_2)})\| _2^2 = \sum_{(i_1, j_1), (i_2, j_2) \in \tilde{\N}} x_{(i_1, j_1), (i_2, j_2)}^2$. Applying Corollary 16 from \cite{ALM} with $U:= U_1+U_2$ and $t > 0$, we get
\begin{align}\label{HansonWright}
\begin{aligned}
\Pro &\biggl( \| S_{\leq d'}(\Sigma- \hat{\Sigma})P_{>d'}\| _2 \geq t+C\sqrt{n^{-1}\tr(\Sigma_{d'})\sum_{k>d'}\ld_k}\biggr)\\
&\hspace{5cm} \leq 2 \exp\left(-\frac{1}{C}\min\left\{\frac{t^2}{U^2}, \frac{t}{V}\right\}\right)
\end{aligned}
\end{align}
and thus, with probability at least $1-2\exp(-t)$,
\begin{align*}
&\| S_{\leq d'} (\Sigma- \hat{\Sigma})P_{>d'}\| _2^2\\
&\qquad \leq C \biggl(\sqrt{n^{-1}\tr(\Sigma_{d'})\sum_{k>d'}\ld_k}+ \max\left\{U\sqrt{t}\, ,\, Vt\right\}\biggr)^2\\
&\qquad \leq C\left(\tr(\Sigma_{d'})\frac{1}{n}+\tr(\Sigma_{d'}) \frac{t}{n^2} + \| \Sigma_{d'}\| _{\infty} \frac{t}{n}+ \| \Sigma_{d'}\| _{\infty}\frac{t^2}{n^2}\right) \sum_{k>d'}\ld_k.
\end{align*}
The second term can be dropped using \eqref{AssEV} yielding the claim.
\end{proof}

\begin{proof}[End of proof of Theorem \ref{techThm}]
We choose $0<t \leq c_2'\| \Sigma_{d'}\| _{\infty}^{-2} n$. Then $\| \Sigma_{d'}\| _{\infty} (t/n)^2$ in \eqref{EqLemCI} is dominated by $\| \Sigma_{d'}\| _{\infty} t/n$. Combining the results from Lemma \ref{lem:boundRW}--\ref{LemCI}, we can finally argue that \eqref{EqtechThm}
holds with probability of $1-4\exp(-t)$. Restricting  ourselves to $t\geq 1$, this inequality also holds with probability at least $1-\exp(-t)$ for all $1\leq t \leq c_2'\| \Sigma_{d'}\| _{\infty}^{-2}n$, provided that we adjust the constants $C_1'$ and $c_2'$ appropriately.
\end{proof}

\section{Proofs of the corollaries}
\label{AppProof}

\begin{proof}[Proof of Corollary \ref{CorPD}]
First, there is a constant $C>0$ depending only on $\alpha$ and $K$ such that $\sum_{k>d'}\lambda_k\leq C(d')^{1-\alpha}$ for every $d'\geq 1$. In what follows we let $c_1$ be the constant from Theorem \ref{Theorem} such that $d\leq c_1n$ and we assume that $C\geq 2K$. We now first consider 
the case that there is a $d'\leq d$ such that $\lambda_{d'}\geq Cd^{-\alpha}$. Then we have $\lambda_{d'}\geq 2\lambda_{d+1}$, as well as $\max(d',\ld_{d'}^{-1}\sumk \ld_k)\leq d\leq c_1n$. Hence, we can apply Theorem \ref{Theorem} to the maximal $d'\leq d$ with this property,
leading to $R(\PVH)\leq C_2\sum_{k>d'}\lambda_k< C_2Cd^{1-\alpha}+C_2\sum_{k>d}\lambda_k\leq 2C_2Cd^{1-\alpha}$ with probability at least $1-\exp(-c_2n)$ and some constant $C_2>0$ depending only on $L$. On the other hand, if $\lambda_{d'}<Cd^{-\alpha}$ for all $d'\leq d$ 
(implying that $\lambda_1<Cd^{-\alpha}$), then the trivial bound $R(\PVH)\leq \tr(\Sigma)$ leads to $R(\PVH)\leq Cd^{1-\alpha}+\sum_{k>d}\lambda_k\leq 2Cd^{1-\alpha}$. This completes the proof.
\end{proof}

\begin{proof}[Proof of Corollary \ref{CorNED}]
For all $d\geq 1$ we have
\begin{equation}
\label{EqCompRT}
C^{-1}(d+1)^{1-\beta}\exp(-\alpha (d+1)^\beta)\leq\sum_{k>d}\lambda_k\leq C d^{1-\beta}\exp(-\alpha d^\beta)
\end{equation}
with a constant $C>1$ depending only on $\alpha,\beta$ and $\PD$. In fact, this can be seen by a comparison of the sum with an integral combined with standard estimates for the incomplete Gamma function.  Moreover, by a concavity argument, we have 
$\beta x\leq 1-(1-x)^\beta\leq 2^{1-\beta}\beta x$, $x\in[0,1/2]$ and $\beta\in(0,1]$, from which we deduce that for $d'=d+1-k$ and $k\leq (d+1)/2$, the inequality
\begin{equation}
\label{EqCompEV}
\PD^{-2}\exp(\alpha\beta (d+1)^{\beta-1}k)\leq \frac{\lambda_{d'}}{\lambda_{d+1}}\leq\PD^2\exp(2^{1-\beta}\alpha\beta (d+1)^{\beta-1}k)
\end{equation}
holds. We now verify the conditions of Theorem \ref{Theorem} with $d'=d+1-k$, $k=\lceil (\alpha\beta)^{-1}(d+1)^{1-\beta}\log(2\PD^2)\rceil$. In the following, we will assume without loss of generality that $k\leq (d+1)/2$ (meaning that $d$ is larger than a constant depending on $\alpha,\beta$ and
$\PD$), because the bound \eqref{EqTheorem1} is obvious in the opposite case. First, using \eqref{EqCompEV}, $k$ is chosen such that $\ld_{d'}\geq 2\ld_{d+1}$ holds. In addition, using \eqref{EqCompRT}, we have $\ld_{d}^{-1}\sumk \ld_k \leq Cd^{1-\beta}$. Hence, the assumptions of Theorem
\ref{Theorem} are satisfied for $d\leq (c_1/C)n$, and \eqref{EqTheorem2} yields $ R(\PVH) \leq C_2 \sum_{k>d'}\lambda_k$ with probability at least $1-\exp(-c_2n)$. Thus the first claim follows from inserting  \eqref{EqCompRT} for $d=d'$ followed by an application of \eqref{EqCompEV}.
Moreover, the second claim follows from the first one, by applying \eqref{EqCompRT}.
\end{proof}

\subsection*{Acknowledgement}
The research of Martin Wahl has been partially funded by Deutsche Forschungsgemeinschaft (DFG) through grant CRC 1294 ``Data Assimilation'', Project (A4) ``Nonlinear statistical inverse problems with random observations''.

\bibliographystyle{plain}
\bibliography{lit}

\end{document}